\renewcommand*{\backrefalt}[4]{%
        \ifcase #1 (Not cited.)%
        \or        (Cited on page~#2.)%
        \else      (Cited on pages~#2.)%
        \fi}
\DeclareRobustCommand{\em}{%
  \@nomath\em \if b\expandafter\@car\f@series\@nil
  \normalfont \else \bfseries\itshape \fi}
\newtheorem{theorem}{Theorem}[section]
\newtheorem{lemma}[theorem]{Lemma}
\newtheorem{proposition}[theorem]{Proposition}
\theoremstyle{definition}
\newtheorem{definition}[theorem]{Definition}
\newtheorem{example}[theorem]{Example}
\theoremstyle{remark}
\newtheorem{remark}[theorem]{Remark}
\numberwithin{equation}{section}
\numberwithin{figure}{section}
\newcommand{\B}[1]{{\mathbf #1}}
\newcommand{\OP}{\operatorname}
\newcommand{\AL}[1]{\textcolor{red}{AL: #1}}
\begin{document}

\title{Are free groups of different ranks bi-invariantly quasi-isometric?}
\author{Jarek K\k{e}dra}
\affil{
University of Aberdeen and University of Szczecin\\
{\tt kedra@abdn.ac.uk}
}
\author{Assaf Libman}
\affil{
University of Aberdeen\\
{\tt a.libman@abdn.ac.uk}
}

\maketitle


\renewcommand{\thefootnote}{}

\footnote{{\it 2020 Mathematics Subject Classification}: Primary 20F65; Secondary 51F30.}


\footnote{{\it Key words and phrases}: Free group; bi-invariant metric;
word metric; quasi-isometry.}

\renewcommand{\thefootnote}{\arabic{footnote}}
\setcounter{footnote}{0}


\begin{abstract}
We prove that a homomorphism between free groups of finite rank
equipped with the bi-invariant word metrics associated with
finite  generating sets is a quasi-isometry
if and only if it is an isomorphism.
\end{abstract}

\section{Introduction}\label{S:intro}

Let $\B F_m$ and $\B F_n$ be free groups of rank $m$ and $n$, respectively,
equipped with the bi-invariant word metrics associated with finite
generating sets (see Section \ref{S:preliminary} for definitions).

\begin{theorem}\label{T:main}
Let $n\geq 2$.
A homomorphism $\varphi\colon \B F_m\to \B F_n$ 
is a quasi-isometry if and only if it is an isomorphism.
\end{theorem}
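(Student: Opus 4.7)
The ``if'' direction is standard: an isomorphism sends a free generating set to another, and any two finite generating sets give bi-Lipschitz equivalent bi-invariant word metrics. For ``only if'', let $\varphi\colon\B F_m\to\B F_n$ be a $(K,C)$-quasi-isometry and consider the stable bi-invariant norm $\tau(g):=\lim_{i\to\infty}\|g^i\|_{\mathrm{bi}}/i$. Applying the quasi-isometry bounds to $g^i$ and using $\varphi(g^i)=\varphi(g)^i$, passage to the limit in $i$ yields
\[
\tfrac{1}{K}\,\tau(g)\;\le\;\tau(\varphi(g))\;\le\;K\,\tau(g)
\qquad(g\in\B F_m).
\]
This is the backbone of the remaining steps.

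\emph{Injectivity.} I aim to show $\tau(g)>0$ for every $g\in\B F_m\setminus\{1\}$. If $\bar g\ne 0$ in $\B Z^m$, then $\tau(g)\ge\|\bar g\|_1>0$ because the abelianization $\B F_m\to(\B Z^m,\|\cdot\|_1)$ is $1$-Lipschitz. If $\bar g=0$, then necessarily $m\ge 2$ and $g\in[\B F_m,\B F_m]\setminus\{1\}$; the positivity of stable commutator length in free groups (via Brooks quasi-morphisms) together with Bavard duality and the estimate $|\phi(g)|\le D(\phi)\,\|g\|_{\mathrm{bi}}$ for homogeneous quasi-morphisms vanishing on generators gives $\tau(g)\ge 2\operatorname{scl}(g)>0$. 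The displayed inequality then prevents $\varphi(g)=1$, so $\varphi$ is injective. (The boundary cases $m\in\{0,1\}$ are handled separately: $\B F_0$ is bounded but $\B F_n$ is not, and for $m=1$ a Brooks quasi-morphism detecting $[x_1,x_2]\in\B F_n$ rules out bi-invariant coboundedness of any cyclic image.)

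\emph{Rank lower bound $m\ge n$.} The induced $\B Z$-linear map $\bar\varphi\colon\B Z^m\to\B Z^n$ is Lipschitz in $\ell^1$-norms: compose the $\ell^1$-isometric section $v\mapsto y_1^{v_1}\cdots y_m^{v_m}$ of $\B F_m\to\B Z^m$ with $\varphi$ and then with the $1$-Lipschitz abelianization $\B F_n\to\B Z^n$. Coarse surjectivity of $\varphi$, tested on $x_j^r$ as $r\to\infty$ and $j$ varies, forces $\bar\varphi(\B Z^m)\subseteq\B Z^n$ to be $\ell^1$-cobounded, hence of full rank $n$; so $m\ge n$.

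\emph{Equality $m=n$ and surjectivity.} What remains is to prove $m\le n$ (forcing $m=n$) and $\varphi(\B F_m)=\B F_n$. For the rank upper bound, the section above produces a quasi-isometric embedding $(\B Z^m,\|\cdot\|_1)\hookrightarrow(\B F_n,\|\cdot\|_{\mathrm{bi}})$, and one would argue $m\le n$ by comparing abelianization images of balls. For surjectivity, the plan is to upgrade bi-invariant coboundedness of $\varphi(\B F_m)$ to finite index, then invoke the Nielsen--Schreier formula $m=1+[\B F_n:\varphi(\B F_m)](n-1)$ with $m=n$ to conclude the index equals $1$. The principal obstacle, and the technical heart of the paper, is precisely these last two implications: because bi-invariant balls in $\B F_n$ are infinite, a purely volume-theoretic argument is unavailable, so one presumably must invoke homogeneous quasi-morphisms on $\B F_n$ to separate cosets of $\varphi(\B F_m)$ and to obstruct $\B Z^m$ from embedding bi-invariantly for $m>n$.
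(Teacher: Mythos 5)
Your proposal correctly handles the easy parts (the ``if'' direction, and injectivity via positivity of the stable norm --- the paper gets the latter more directly by noting that a nontrivial normal subgroup of $\B F_m$ must contain a cyclically reduced $w\neq 1$ with $\overline{\psi_w}(w^k)=k$, hence is unbounded), but it stops exactly where the real work begins. The entire content of the theorem is the implication ``injective but not surjective $\Rightarrow$ not a quasi-isometry'', and your proposal defers both halves of it: you write that one ``would argue'' $m\le n$ by comparing abelianization images of balls, and that one should ``upgrade bi-invariant coboundedness of $\varphi(\B F_m)$ to finite index'' before invoking Nielsen--Schreier, while yourself acknowledging that these are ``the principal obstacle''. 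Neither step is carried out, the $m\le n$ step has no indicated mechanism (bi-invariant balls are infinite, as you note), and the Nielsen--Schreier route additionally presupposes $m=n$. As written this is a plan, not a proof.

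For comparison, the paper splits on the index of the image rather than on ranks, and never needs $m\le n$. If $[\B F_n:\varphi(\B F_m)]<\infty$ with proper image, it takes a finite-index normal $N\le\varphi(\B F_m)$, sets $g_1=x^ky^\ell\in N$ (not a proper power, so $C_{\B F_n}(g_1)=\langle g_1\rangle\le\varphi(\B F_m)$) and $g_2=ug_1u^{-1}$ with $u\notin\varphi(\B F_m)$; then $\langle g_1\rangle$ and $\langle g_2\rangle$ are conjugate in $\B F_n$ but not in the image, a little counting quasi-morphism separates their preimages $h_1,h_2$, and $\{h_1^kh_2^{-k}\}$ is unbounded with bounded image, so $\varphi$ is distorted. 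If the index is infinite, the key input is a \emph{killer word}: a reduced word not occurring as a subword of any element of the (finitely generated) image; its little counting quasi-morphism is Lipschitz, vanishes on the image, and is nonzero on $\B F_n$, so $\varphi$ is not quasi-surjective. That existence statement is exactly what your ``upgrade coboundedness to finite index'' step would require, and it is genuinely nontrivial: the paper's remark on $\ker(\B F_2\to\B Z_2*\B Z_2)$ shows an infinite-index subgroup of infinite rank \emph{can} be cobounded, so finite generation must enter the argument in an essential way.
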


It is well known that if both $\B F_m$ and $\B F_n$ are equipped with the
standard left-invariant word metrics associated with finite generating sets (such metrics are
not bi-invariant) then the inclusion of a finite index subgroup is a
quasi-isometry \cite[Corollary 5.31]{zbMATH06866242}. Thus the above theorem is
in contrast with the classical geometric group theory. It can be considered
as a form of rigidity where being a homomorphism and a quasi-isometry implies
being an isomorphism. It is interesting to what extent this rigidity can be generalised.
For example, we do not know whether a general map, not necessarily a
homomorphism, between free groups of different ranks can be a quasi-isometry.
In particular, the question in the title remains open (see also
\cite[Question 10.2.4]{zbMATH07779808} for a related question).
On the other hand, we know
that it does not hold for other groups. For example, the abelianisation
homomorphism $\B N\to \B Z^n$, from a torsion free nilpotent group is a quasi-isometry
\cite[Theorem 5.8]{zbMATH06532523}.
Bi-invariant word metrics and their 
general properties, particularly in connection to free groups, are discussed
in~\cite{zbMATH06532523,zbMATH06563655,zbMATH07843754}.

The proof of Theorem \ref{T:main} splits into three cases:
\begin{itemize}
\item[(a)]
If $\varphi$ is an isomorphism then it is a quasi-isometry.
\item[(b)] 
The image of $\varphi$ is a proper subgroup of finite index in $\B F_m$, 
in which case we prove that $\varphi$ is distorted by showing that
it carries an unbounded set in $\B F_m$ to a bounded set in $\B F_n$ (Proposition \ref{P:distorted}).
\item[(c)]
The image of $\varphi$ has  infinite index in $\B F_m$, in which case we show that $\varphi$ is not quasi-surjective (Proposition~\ref{P:not-qsur}).
\end{itemize}

\paragraph{Acknowledgements.} The question whether free groups of different
ranks equipped with bi-invariant word metrics are quasi-isometric was asked by
Henry Jaspars. We thank him and M. Brandenbursky for discussions. We also
thank Arielle Leitner, Federico Vigolo and Michał Marcinkowski for comments
on the initial version of this paper.

\section{Preliminaries}\label{S:preliminary}
The material presented in this section is well known and can be
found, for example, in Bridson-Haefliger \cite{zbMATH01385418} 
or Calegari \cite{zbMATH05577332}.

\subsection{Word metrics}\label{subsec:word metrics}

Let $G$ be a group generated by a set $S\subseteq G$. The corresponding
{\em word norm} is defined by
$$
|g|_S = \min\left\{n\in \B N\ |\ g = s_1\dots s_n,\ s_i\in S^{\pm 1}\right\}
$$
and the associated metric by $d_S(g,h) = |g^{-1}h|_S$. 
This metric is left-invariant.

The closure of $S \subseteq G$ to inverses and conjugation is the set
\begin{equation}\label{eqn:barS}
\overline{S} = \{ gsg^{-1} : s^{\pm 1} \in S, g \in G\}.
\end{equation}
We say that $S$ {\em normally generates} $G$ if $\overline{S}$ generates $G$.
In this case the word norm $|g|_{\overline{S}}$ on $G$ is invariant with respect 
to conjugation, we denote it by
\[
\| g \|_S 
\]
and call it the {\em conjugation-invariant word norm} on $G$ associated with $S$.
The metric $d_{\overline{S}}(x,y)=\|x^{-1}y\|_S=\|xy^{-1}\|_S$ on $G$ is bi-invariant.
We abusively denote it by $d_S$ (since we are only interested in bi-invariant metrics).
If $S$ is finite then the Lipschitz class of $d_S$ is maximal 
in the sense that for any bi-invariant metric $d$ on $G$, the identity map $\OP{id} \colon (G,d_S) \to (G,d)$ is Lipschitz.

\begin{lemma}\label{L:id-lip}
Let $S\subseteq G$ be a finite set normally generating $G$. Let $d$ be any
bi-invariant metric on $G$. Then the identity $\OP{Id}\colon (G,d_S)\to (G,d)$
is Lipschitz. In particular, bi-invariant word metrics associated with
finite normally generating sets are bi-Lipschitz equivalent.
\end{lemma}
\begin{proof}
Let $C = \max\{d(s,1)\ |\ s\in S\} = \max\{d(s,1)\ |\ s\in \overline{S}\}$, where
the second equality follows from the bi-invariance of $d$. 
Let $g,h\in G$ and let $n=d_S(g,h) = \|g^{-1}h\|_S$.
This means that $g^{-1}h=s_1\dots s_n$ for some $s_i\in \overline{S}$. We have the
following estimate, which shows that the identity is Lipschitz.
\begin{align*}
d(g,h) 
&= d(1,g^{-1}h)\\
&= d(1,s_1\dots s_n)\\
&\leq d(1,s_1)+ d(s_1,s_1\dots s_n) & \text{( by triangle inequality)}\\
&= d(1,s_1) + d(1,s_2\dots s_n) &\text{(by left-invariance)}\\
&\leq \sum_{i=1}^n d(s_i,1) \leq C\ n = C\ d(g,h).
\end{align*}
\end{proof}

Let $\B F_n = \langle s_1,\ldots, s_n\rangle$ be the free group of rank $n$.
We call the set $\{s_1,\ldots,s_n\}$ the {\em standard generating set}.
In this paper we are concerned with the bi-invariant word metrics
on free groups of finite rank associated with their standard generating sets.

\subsection{Quasi-isometries}

A map $\varphi\colon (X_1,d_1)\to (X_2,d_2)$ between metric spaces is called
a {\em quasi-isometry} if  
\begin{enumerate}
\item\label{item:qie}
There exist  $C>0$ and $D\geq 0$ such that for every $x,y\in X$,
\begin{equation}
\frac{1}{C}d_1(x,y)-D\leq d_2(\varphi(x),\varphi(y)) \leq Cd_1(x,y) + D
\label{Eq:qiso}
\end{equation}
\item\label{item:qs}
There exists $B\geq 0$ such that for every $y\in X_2$ there exists
$x\in X_1$ such that
\begin{equation}
d_2(\varphi(x),y)\leq B.
\label{Eq:qsur}
\end{equation}
\end{enumerate}
If $\varphi$ satisfies \eqref{Eq:qiso}
then it is called a {\em quasi-isometric embedding}; if it satisfies
the second inequality of \eqref{Eq:qiso} then it is called {\em large-scale Lipschitz}. 
If $\varphi$ satisfies \eqref{Eq:qsur} then it is called {\em quasi-surjective}.
If $\varphi$ is not a quasi-isometric embedding then it is called {\em distorted}.

\subsection{Quasi-morphisms}

A function $\psi\colon G\to \B R$ is called a {\em quasi-morphism} if there
exists $D\geq 0$ such that
\begin{equation}
|\psi(g) - \psi(gh) + \psi(h)|\leq D
\label{Eq:qmor}
\end{equation}
for all $g,h\in G$. The smallest of such $D$ is called the defect of $\psi$ and
denoted by $D_{\psi}$.

A quasi-morphism is  {\em homogeneous} if $\psi(g^n)=n\psi(g)$ for all
$g\in G$ and $n\in \B Z$. 
Every quasimorphism $\psi$ has an associated homogeneous quasimorphism defined by
$$
\overline{\psi}(g) = \lim_{n\to \infty} \frac{\psi(g^n)}{n}.
$$  
It is called the {\em homogenisation} of $\psi$, see \cite[Lemma 2.21]{zbMATH05577332}.
It is straightforward to check that if $\psi$ is homogeneous then $\overline{\psi}=\psi$.
From this it is easy to deduce that any homogeneous quasimorphism is a class function, i.e., it is constant on conjugacy classes in $G$, see \cite[Subsection 2.2.3]{zbMATH05577332}.

\begin{lemma}\label{L:qmor-lip}
Let $G$ be equipped with the bi-invariant word metric associated
with a normally generating subset $S\subseteq G$.
If $\psi\colon G\to \B R$ is a homogeneous quasi-morphism
bounded on $S$ then it is a Lipschitz function.
\end{lemma}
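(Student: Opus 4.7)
The plan is to prove that $\psi$ is $L$-Lipschitz with $L = M + 2D_\psi$, where $M := \sup_{s\in S}|\psi(s)|$ (finite by hypothesis) and $D_\psi$ is the defect of $\psi$. Equivalently, I want $|\psi(g)-\psi(h)|\leq L\,\|gh^{-1}\|_S$ for all $g,h\in G$.

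First I would extract the consequences of homogeneity that I will need: setting $n=0$ and $n=-1$ in the defining identity $\psi(x^n)=n\psi(x)$ gives $\psi(e)=0$ and $\psi(x^{-1})=-\psi(x)$; and, as noted in the paragraph preceding the lemma, $\psi$ is a class function. In particular, for every $t\in\overline{S}$, writing $t=gs^{\pm 1}g^{-1}$ with $s\in S$, I get $|\psi(t)|=|\psi(s^{\pm 1})|=|\psi(s)|\leq M$. Thus $\psi$ is bounded by $M$ on the whole conjugation closure $\overline{S}$, which is the subset that really matters because the bi-invariant norm $\|\cdot\|_S$ is by definition the word norm with respect to $\overline{S}$.

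Next I would bound $|\psi(w)|$ in terms of $\|w\|_S$. Writing $w=t_1\cdots t_n$ with $t_i\in\overline{S}$ and $n=\|w\|_S$, an easy induction on $n$ using the quasi-morphism inequality yields
\[
\left|\psi(w)-\sum_{i=1}^n\psi(t_i)\right|\leq (n-1)D_\psi,
\]
so $|\psi(w)|\leq nM+(n-1)D_\psi\leq (M+D_\psi)\|w\|_S$.

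Finally, applying the quasi-morphism inequality to the pair $(g,h^{-1})$ and using $\psi(h^{-1})=-\psi(h)$ gives
\[
|\psi(g)-\psi(h)-\psi(gh^{-1})|\leq D_\psi,
\]
whence $|\psi(g)-\psi(h)|\leq (M+D_\psi)\|gh^{-1}\|_S+D_\psi$. Either $g=h$, in which case both sides vanish, or $\|gh^{-1}\|_S\geq 1$ and the right-hand side is at most $(M+2D_\psi)\|gh^{-1}\|_S$, delivering the Lipschitz constant $L=M+2D_\psi$. There is no real obstacle here; the one point worth flagging is that it is essential to bound $\psi$ on the full conjugation closure $\overline{S}$ and not merely on $S$, and this is precisely where homogeneity (via the class-function property) is used.
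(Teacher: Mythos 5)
Your proposal is correct and follows essentially the same route as the paper's proof: extend the bound from $S$ to $\overline{S}$ via the class-function property (and homogeneity for inverses), bound $|\psi(w)|$ by $(M+D_\psi)\|w\|_S$ using the defect inequality along a word in $\overline{S}$, and absorb the final additive $D_\psi$ using $\|gh^{-1}\|_S\geq 1$, arriving at the same Lipschitz constant. If anything, you are slightly more explicit than the paper in noting that $\overline{S}$ contains conjugates of \emph{inverses} of elements of $S$, so that $\psi(s^{-1})=-\psi(s)$ is also needed.
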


\begin{proof}
Suppose that $|\psi(s)|\leq B$ for some $B\geq 0$ and all $s\in S$.
Consider some $g \in G$ and set $\|g\|_S=n$.
By definition of the bi-invariant word metric, $g=s_1\dots s_n$ for some 
$s_i\in \overline{S}$. 
Let $D \geq 0$ be the defect of $\psi$.
Since $\psi$ is a class function $|\psi(s)| \leq B$ for all $s \in \bar{S}$ (see~\eqref{eqn:barS}).
Then
\begin{align*}
|\psi(g)| 
& = |\psi(s_1\dots s_n)|\\
&\leq \sum_{i=1}^n |\psi(s_i)| + (n-1)D  &\text{(see \cite[Lemma 2.17]{zbMATH05577332})} \\
& \leq (B+D)n
\\
&= (B+D)\|g\|.
\end{align*}
Let $g\neq h\in G$. 
Since $\psi$ is homogeneous and since $\| gh^{-1}\|$ is a positive integer 
\begin{align*}
|\psi(g)-\psi(h)|
&=|\psi(g) - \psi(gh^{-1}) + \psi(h^{-1}) + \psi(gh^{-1})| &
\\
&\leq D + |\psi(gh^{-1})| &
\\
&\leq D + (B+D) \cdot \|gh^{-1}\|   & \text{(by the calculation above)}\\
& \leq (B+2D) \cdot \|gh^{-1}\|   & \text{ (since $\| gh^{-1}\|$ is a positive integer)} \\
&= (B+2D) \cdot d(g,h). &
\end{align*}
\end{proof}

\subsection{The little counting quasi-morphism}\label{subsec:little counting}

Consider the free group $\B F_n$ 
with its the standard set of generators $S=\{s_1,\dots,s_n\}$.
In what follows the standard word norm $|g|_S$ will be denoted
by $\ell(g)$ to make various formulas easy to read.

\begin{definition}\label{def:small counting}
Let $1 \neq w \in \B F_n$ be presented as a reduced word.
The {\em little counting function} $c_w \colon \B F_n \to \B N$ is defined by
\[
c_w(g) = \max\{k : \text{the reduced form of $g$ contains $k$ {disjoint} copies of $w$} \}
\]
The {\em little counting quasi-morphism}  $\psi_w \colon \B F_n \to \B Z$  is the function
\[
\psi_w(g) = c_w(g)-c_{w^{-1}}(g).
\]
\end{definition}
\noindent
It is shown in \cite[Section 2.3.2]{zbMATH05577332} that $\psi_w$ is indeed a quasi-morphism with 
defect at most~$2$.

Let $g \in \B F_n$.
If $w \neq 1$, it is clear that $c_{w^{\pm 1}}(g) \leq \lfloor \ell(g)/\ell(w) \rfloor$ and therefore
\[
|\psi_w(g)| \leq \lfloor \ell(g)/\ell(w) \rfloor.
\]
Moreover, $\ell(g^k) \leq k\cdot \ell(g)$ and therefore $|\psi_w(g^k)| \leq \left\lfloor \frac{k \cdot \ell(g)}{\ell(w)} \right\rfloor$.
Since $\frac{1}{k} \cdot \left\lfloor \frac{k \cdot \ell(g)}{\ell(w)} \right\rfloor \xrightarrow{k \to \infty}\frac{\ell(g)}{\ell(w)}$, 
the homogenisation of $\psi_w$ satisfies
\begin{equation}\label{eq:little ell(g)/ell(w)}
|\overline{\psi_w}(g)| 
\leq
\frac{\ell(g)}{\ell(w)}.
\end{equation}
By \cite[Lemma 2.27]{zbMATH05577332} copies of $w$ in $g$ are disjoint from those of $w^{-1}$.
Assume that $1~\neq~w~\in~\B F_n$ is cyclically reduced.
Then for any $k>0$ the word representing $w^k$ is the concatenation of $k$ copies of $w$ which contains $k$ disjoint copies of $w$ leaving no room for copies of $w^{-1}$.
It follows that $\psi_k(w^k)=c_w(w^k)-c_{w^{-1}}(w^k)=k$.
We deduce that if $w$ is cyclically reduced then 
\begin{equation}\label{eq:psi_w(w)=1}
\overline{\psi_w}(w)=1.
\end{equation}

\section{Proof of the main result}
Throughout, free groups are equipped with the conjugation invariant word norm associated 
with their standard generating sets.
All groups are assumed to be equipped with conjugation-invariant norms.

The next two lemmas are the core of the proof of Theorem \ref{T:main}.
Given a homomorphism $i \colon \B F_m \to \B F_n$, if its image is a proper subgroup of 
finite index we will prove the existence of a  quasi-morphisms which satisfies the conditions 
of the first lemma, and if the index is infinite we will prove the existence of a quasi-morphism 
satisfying the conditions of the second lemma.

A function $f \colon G \to \B R$ is called homogeneous if $f(g^n)=n \cdot f(g)$ for all $n>0$.

\begin{lemma}\label{lem:not q-embedding}
Let $\rho \colon H \to G$ be a homomorphism.
Suppose that there exists a Lipschitz homogeneous function $ \phi \colon H \to \B R$ and elements $h_1,h_2 \in H$ such that
\begin{enumerate}[label=(\roman*)]
\item\label{item:not q-emb separation}
$\phi(h_1) \neq \phi(h_2)$.
\item\label{item:not q-emb conjugate}
$\rho(h_1), \rho(h_2)$ are conjugate in $G$.
\end{enumerate}
Then $\rho$ is not a quasi-isometric embedding.
\end{lemma}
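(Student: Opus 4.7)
The strategy is to exhibit pairs $(h_1^k,h_2^k)$ in $H$ whose $\rho$-images stay a bounded distance apart in $G$, while the source pairs themselves drift apart linearly in $H$. This directly violates the lower inequality in \eqref{Eq:qiso} and thus rules out $\rho$ being a quasi-isometric embedding.

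First, I would translate the conjugation hypothesis into an algebraic identity. Fix $g\in G$ with $\rho(h_1)=g\rho(h_2)g^{-1}$. Since powers commute with conjugation, for every $k\geq 1$,
\[
\rho(h_1^k)\,\rho(h_2^k)^{-1} \;=\; g\rho(h_2)^k g^{-1}\rho(h_2)^{-k} \;=\; [g,\rho(h_2)^k].
\]
The crucial structural point is that this ``defect from being a homomorphism into an abelian situation'' is always a single commutator, independently of $k$. Next I would bound this commutator uniformly in the bi-invariant norm on $G$. Regrouping as $g\cdot(\rho(h_2)^k g^{-1}\rho(h_2)^{-k})$ and applying the triangle inequality gives
\[
\|[g,\rho(h_2)^k]\|_G \;\leq\; \|g\|_G + \|\rho(h_2)^k g^{-1}\rho(h_2)^{-k}\|_G \;=\; 2\|g\|_G,
\]
where the last equality uses conjugation invariance of the $G$-norm applied to a conjugate of $g^{-1}$. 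Hence $d_G(\rho(h_1^k),\rho(h_2^k))\leq 2\|g\|_G$ for all $k$.

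On the $H$-side, Lipschitz plus homogeneity of $\phi$ forces the opposite, unbounded, behaviour. Let $L$ be a Lipschitz constant for $\phi$. Homogeneity gives $\phi(h_i^k)=k\phi(h_i)$, so
\[
k\,|\phi(h_1)-\phi(h_2)| \;=\; |\phi(h_1^k)-\phi(h_2^k)| \;\leq\; L\cdot d_H(h_1^k,h_2^k).
\]
Since $\phi(h_1)\neq\phi(h_2)$ by hypothesis~\ref{item:not q-emb separation}, $d_H(h_1^k,h_2^k)\to\infty$ linearly in $k$, while $d_G(\rho(h_1^k),\rho(h_2^k))$ stays bounded. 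Substituting into the lower half of \eqref{Eq:qiso} and taking $k$ large contradicts any candidate quasi-isometric embedding constants $C,D$.

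The only non-routine step is the commutator bound. The naive decomposition $[g,\rho(h_2)^k]=(g\rho(h_2)^k g^{-1})\cdot\rho(h_2)^{-k}$ gives $2\|\rho(h_2)^k\|_G$, which grows with $k$ and is useless. The correct regrouping isolates a conjugate of $g^{-1}$ and uses conjugation invariance to collapse it to $\|g\|_G$; this is precisely the feature distinguishing bi-invariant metrics from ordinary right-invariant word metrics and is what underlies the rigidity phenomenon announced in Theorem~\ref{T:main}.
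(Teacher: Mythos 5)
Your proposal is correct and follows essentially the same route as the paper: both reduce $\rho(h_1^k)\rho(h_2^k)^{-1}$ to a single commutator, bound it by $2\|g\|_G$ via conjugation invariance of the bi-invariant norm, and use Lipschitzness plus homogeneity of $\phi$ to show $d_H(h_1^k,h_2^k)\to\infty$. The only difference is cosmetic (which of $h_1,h_2$ is conjugated), so nothing further is needed.
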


\begin{proof}
We construct an unbounded $Y \subseteq H$ such that $\rho(H)$ is bounded in $G$.
Set 
\[
Y=\{h_1^k h_2^{-k} : k >0\}.
\]
Denote $g_1=\rho(h_1)$ and $g_2=\rho(h_2)$.
By assumption there exists $g \in G$ such that $g_2=gg_1g^{-1}$.
Then $\rho(Y)$ is bounded in $G$ since for any $k>0$
\[
\| \rho(h_1^k h_2^{-k})\|_G = \| g_1{}^k \cdot g_2{}^{-k} \|_G =\| g_1{}^k g g_1{}^{-k} g^{-1}\|_G \leq \| g_1{}^k g g_1{}^{-k}\|_G+\| g^{-1}\|_G \leq 2\|g\|.
\]
Next, let $C>0$ be the Lipschitz constant of $\phi$.
Then for any $k>0$
\[
C \cdot \|h_1^k h_2^{-k}\|_H 
=
C \cdot d_H(h_1^k,h_2^{k}) 
\geq 
| \phi(h_1^k)-\phi(h_2^k)| 
= 
k \cdot |\phi(h_1)-\phi(h_2)|.
\]
Since $\phi(h_1) \neq \phi(h_2)$ it follows that $\| h_1^k h_2^{-k}\|_H \xrightarrow{k \to \infty} \infty$, hence $Y$ is unbounded.
\end{proof}

\begin{lemma}\label{lem:not q-sur}
Let $\rho \colon H \to G$ be a homomorphism.
Assume that there exists a Lipschitz homogeneous function $\phi \colon G \to \B R$ such that 
\begin{enumerate}[label=(\roman*)]
\item
$\phi \neq 0$.
\item
$\phi \circ \rho =0$.
\end{enumerate}
Then $\rho$ is not quasi-surjective.
\end{lemma}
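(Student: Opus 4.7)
The plan is to derive a contradiction from assuming $\rho$ is quasi-surjective, by showing that $\phi$ would then have to be bounded on $G$, which is incompatible with being a nonzero homogeneous function.

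First, I would suppose for contradiction that $\rho$ is quasi-surjective, so there is a constant $B \geq 0$ such that every $g \in G$ satisfies $d_G(\rho(h), g) \leq B$ for some $h \in H$. Let $C$ be the Lipschitz constant of $\phi$. Since $\phi \circ \rho = 0$, for any such $g$ and corresponding $h$ we have
\[
|\phi(g)| = |\phi(g) - \phi(\rho(h))| \leq C \cdot d_G(g, \rho(h)) \leq CB.
\]
Thus $\phi$ is globally bounded on $G$ by the constant $CB$.

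Next, I would exploit homogeneity to get a contradiction. Since $\phi \neq 0$, pick $g_0 \in G$ with $\phi(g_0) \neq 0$. By the homogeneity hypothesis, $\phi(g_0^n) = n \cdot \phi(g_0)$ for every $n > 0$, so $|\phi(g_0^n)| \to \infty$, contradicting the global bound $CB$ established above. Therefore the assumption that $\rho$ is quasi-surjective must fail.

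There is no real obstacle here: the proof is essentially a one-liner once the setup of Lemma \ref{lem:not q-embedding} is in place. The only point worth being careful about is that boundedness of $\phi$ on $\rho(H)$ is immediate from $\phi \circ \rho = 0$ (rather than needing a separate argument), and that the Lipschitz hypothesis is exactly what transports this boundedness to a bounded neighbourhood of $\rho(H)$. The fact that a nonzero homogeneous function is automatically unbounded on $G$ (via powers of a single nonzero element) is what makes the contradiction work without any further structure on $G$.
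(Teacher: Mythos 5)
Your proof is correct and is essentially the paper's argument in contrapositive form: the paper directly shows $d_G(g_0^n,\rho(H)) \geq C^{-1}n|\phi(g_0)| \to \infty$ using the same Lipschitz estimate, homogeneity, and vanishing of $\phi$ on $\rho(H)$. No substantive difference.
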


\begin{proof}
By assumption there exists $g \in G \setminus \rho(H)$ such that $\phi(g) \neq 0$.
Let $C>0$ be the Lipschitz constant of $\phi$.
For any $h \in H$ and any $n >0$
\[
C \cdot d_G(g^n,\rho(h)) 
\geq
|\phi(g^n) - \phi(\rho(h))|
= 
|\phi(g^n)|
= 
n \cdot |\phi(g)|.
\]
It follows that $d(g^n, \rho(H)) \geq C^{-1}|\phi(g)| \cdot n \xrightarrow{n \to \infty} \infty$.
This completes the proof.
\end{proof}

We now specialise to the case when $H=\B F_m$ and $G=\B F_n$ and prove Theorem \ref{T:main}.
The maps $\phi$ in Lemmas \ref{lem:not q-embedding} and \ref{lem:not q-sur} will be obtained as homogeneous quasimorphisms.
The Lipschitz condition in these Lemmas is a consequence of Lemma \ref{L:qmor-lip}.
To achieve condition \ref{item:not q-emb separation} in Lemma \ref{lem:not q-embedding} we need the next lemma.

\begin{lemma}[Separation Lemma]\label{L:separation}
Let $g,h\in \B F_n$ be such that the subgroups $\langle g \rangle$ and $\langle h \rangle$ are not conjugate.
Then there exists a homogeneous quasi-morphism 
$\psi\colon \B F_n \to \B R$ such that $\psi(g)\neq\psi(h)$.
\end{lemma}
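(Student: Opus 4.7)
The plan is to realise the separating $\psi$ as the homogenisation $\overline{\psi_w}$ of a little counting quasi-morphism (Subsection~\ref{subsec:little counting}) for a carefully chosen cyclically reduced word $w$. Since homogeneous quasi-morphisms are class functions, I first replace $g$ and $h$ by cyclically reduced conjugates, and then express them as $g = u^a$ and $h = v^b$, where $a, b$ are nonzero integers and $u, v$ are cyclically reduced words that are not proper combinatorial powers (i.e.\ not of the form $z^k$ for a shorter word $z$ and $k \geq 2$). Because a maximal cyclic subgroup of $\B F_n$ is malnormal, $\langle g \rangle$ and $\langle h \rangle$ are conjugate iff $u^a$ is conjugate to $v^{\pm b}$; and since two cyclically reduced elements of a free group are conjugate iff one is a cyclic rotation of the other, the hypothesis becomes the combinatorial statement that $u^a$ is not a cyclic rotation of $v^b$ or of $v^{-b}$.

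\emph{Case 1: $u$ is a cyclic rotation of $v$ or of $v^{-1}$.} Here I take $w := u$ and set $\psi := \overline{\psi_u}$. By \eqref{eq:psi_w(w)=1}, homogeneity and the class function property, $\psi(g) = a$ and $\psi(h) = \overline{\psi_u}(v^b) = \pm b$, the sign recording whether $v \sim u$ or $v \sim u^{-1}$. In both sub-cases the conjugacy $u^a \sim v^{\pm b}$ reduces to $u^a \sim u^{\pm b}$, which by primitivity of $u$ holds iff $|a| = |b|$; hence the hypothesis gives $|a| \neq |b|$, so $a \neq \pm b$ and $\psi(g) \neq \psi(h)$.

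\emph{Case 2: $u$ is a cyclic rotation of neither $v$ nor $v^{-1}$.} Here I take $w := u^N$ for a sufficiently large $N$ and set $\psi := \overline{\psi_{u^N}}$. Combinatorial primitivity of $u$ together with \eqref{eq:psi_w(w)=1} and homogeneity yield $\psi(g) = a/N \neq 0$. The crux is to show $\psi(h) = 0$ for $N$ large; equivalently, that neither $u^N$ nor $u^{-N}$ occurs as a subword of any power $v^k$. The tool here is the Fine--Wilf periodicity theorem on words: an occurrence of $u^N$ in $v^k$ would, after reducing the starting position modulo $\ell(v)$, exhibit $u^N$ as a prefix of the infinite periodic word $(v')^\infty$ for some cyclic rotation $v'$ of $v$, so $u^N$ would have both periods $\ell(u)$ and $\ell(v)$. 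Once $N\ell(u) \geq \ell(u) + \ell(v)$, Fine--Wilf forces $u^N$ to have period $d := \gcd(\ell(u), \ell(v))$; since $d \mid \ell(u)$, combinatorial primitivity of $u$ gives $d = \ell(u)$, so $\ell(u) \mid \ell(v)$ and $v' = u^{\ell(v)/\ell(u)}$, and then combinatorial primitivity of $v$ forces $\ell(v) = \ell(u)$, making $v$ a cyclic rotation of $u$ --- contrary to the case assumption. The symmetric argument with $u^{-1}$ in place of $u$ rules out occurrences of $u^{-N}$. The Fine--Wilf step is where I expect the main obstacle; once it is in hand, the remainder is routine bookkeeping with the homogeneous quasi-morphism.
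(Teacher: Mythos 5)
Your proof is correct, but it takes a genuinely different and substantially heavier route than the paper's. The paper normalises $g,h$ to cyclically reduced form, orders them so that $\ell(h)\leq\ell(g)$, and uses the single quasi-morphism $\overline{\psi}_g$ (counting copies of $g$ itself): the elementary bound \eqref{eq:little ell(g)/ell(w)} gives $|\overline{\psi}_g(h)|\leq \ell(h)/\ell(g)\leq 1=\overline{\psi}_g(g)$, and the equality case $\ell(h)=\ell(g)$ is dispatched by observing that a length-$\ell(g)$ subword of $h^k$ is a cyclic rotation of $h$, contradicting non-conjugacy of the subgroups. This avoids primitive roots, malnormality, large powers and Fine--Wilf entirely. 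Your argument --- extracting primitive roots $u,v$, splitting on whether $u$ is a cyclic rotation of $v^{\pm 1}$, and using $\overline{\psi}_{u^N}$ with the Fine--Wilf periodicity theorem to force $\overline{\psi}_{u^N}(h)=0$ --- does go through: the bound $N\ell(u)\geq\ell(u)+\ell(v)$ suffices for Fine--Wilf, primitivity of $u$ then forces $\gcd(\ell(u),\ell(v))=\ell(u)$, and primitivity of $v'$ forces $v'=u$, contradicting the case hypothesis; Case 1 correctly reduces the subgroup hypothesis to $|a|\neq|b|$. What your approach buys is finer information (exact values $a/N$ and $0$ of the separating quasi-morphism, and an explicit description in terms of primitive roots), at the cost of importing Fine--Wilf and the conjugacy/rotation and malnormality machinery. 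Two small points to tidy: your normalisation $g=u^a$, $h=v^b$ with $a,b\neq 0$ silently excludes the degenerate case $g=1$ or $h=1$ (trivial to handle separately, and handled automatically by the paper's length argument); and you should note that occurrences of $u^{\pm N}$ in negative powers $v^k$, $k<0$, are covered by running the same argument with $v^{-1}$, which your case hypothesis also excludes.
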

\begin{proof}
Since homogeneous quasi-morphisms are class functions, we can assume without losing generality that $g$ and $h$ are cyclically reduced and that $\ell(h)\leq \ell(g)$.
Then $g \neq 1$ since otherwise also $h=1$ which contradicts the hypothesis. 

Let $\psi_{g}\colon \B F_n\to \B Z$ be the little counting quasi-morphism, see Definition \ref{def:small counting}, and let $\overline{\psi}_{g}$ denote its homogenisation. 
Since $g$ is cyclically reduced, it follows from \eqref{eq:psi_w(w)=1} that
\begin{equation}\label{eqn:psi=1}
\overline{\psi}_g(g) = 1.
\end{equation}
We complete the proof by showing that $|\overline{\psi}_{g}(h)|< 1$.
If $\ell(h)<\ell(g)$ then by \eqref{eq:little ell(g)/ell(w)}
\[
\left|\overline{\psi}_{g}(h)\right| \leq \ell(h) /\ell(g) < 1
\]
and we are done.
If $\ell(h)=\ell(g)$ we claim that $\psi_g(h^k)=0$ for all $k \geq 0$.
Otherwise, if $\psi_g(h^k) \neq 0$ for some $k$ then either $g$ or $g^{-1}$ must be a subword of the cyclically reduced word $h^k$.
Since $\ell(h)=\ell(g)$ this implies that the reduced word representing $g^{\pm 1}$ is a cyclic permutation of $h$, i.e., $h$ is conjugate to $g^{\pm 1}$.
In particular $\langle g \rangle$ is conjugate to $\langle h \rangle$ which contradicts the hypothesis.
It follows that $\overline{\psi_g}(h)=0$ and this completes the proof.
\end{proof}

We recall that centralisers in the free group are cyclic, generated by elements
which are not proper powers.  Also, no element of the free group other than the
identity is conjugate to its inverse \cite[Proposition 2.19]{zbMATH01554175}.

\begin{proposition}\label{P:distorted}
Let $n \geq 2$.
Any homomorphism $\rho \colon \B F_m \to \B F_n$ whose image is a proper 
subgroup of finite index in $\B F_n$ is not a quasi-isometric embedding.
\end{proposition}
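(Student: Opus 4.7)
The strategy is to apply Lemma~\ref{lem:not q-embedding}: I will construct elements $y_1,y_2\in\B F_m$ whose images under $\rho$ are conjugate in $\B F_n$ while the cyclic subgroups $\langle y_1\rangle$ and $\langle y_2\rangle$ are not conjugate in $\B F_m$. The Separation Lemma (Lemma~\ref{L:separation}), applied to $\B F_m$, will then supply a homogeneous quasi-morphism $\phi\colon\B F_m\to\B R$ with $\phi(y_1)\neq\phi(y_2)$; because such a $\phi$ is the homogenisation of a little counting quasi-morphism, it is bounded by $1$ on the standard generators of $\B F_m$, and Lemma~\ref{L:qmor-lip} upgrades this boundedness to the Lipschitz property required by Lemma~\ref{lem:not q-embedding}.

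To build $y_1,y_2$, set $H=\rho(\B F_m)$ and let $N=\bigcap_{g\in\B F_n}gHg^{-1}$ be the normal core of $H$ in $\B F_n$; it is a proper normal finite-index subgroup contained in $H$. The key preparatory step is to produce $x\in N$ which is not a proper power in $\B F_n$, so that $C_{\B F_n}(x)=\langle x\rangle$. Letting $o_i$ denote the order of $x_i$ in the finite quotient $\B F_n/N$, I set $x=[x_1^{o_1},x_2^{o_2}]$; this lies in $N$, and a short cyclic-period check on the reduced word $x_1^{o_1}x_2^{o_2}x_1^{-o_1}x_2^{-o_2}$ (the positions at which $x_1$ appears with positive exponent form a single interval of length $o_1$, so no proper cyclic rotation fixes the word) shows that $x$ is not a proper power. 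Now pick any $g\in\B F_n\setminus H$ and set $u_1=x$, $u_2=gxg^{-1}$; by normality of $N$, both lie in $N\subseteq H$ and are conjugate in $\B F_n$. Choose any lifts $y_1,y_2\in\B F_m$ with $\rho(y_i)=u_i$.

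The step requiring the most care is verifying that $\langle y_1\rangle$ and $\langle y_2\rangle$ are not conjugate in $\B F_m$, and this is where the facts recalled just before the statement are used. Suppose, aiming at a contradiction, that $ty_2t^{-1}=y_1^{\epsilon}$ for some $t\in\B F_m$ and $\epsilon\in\{\pm1\}$; applying $\rho$ gives $\rho(t)gxg^{-1}\rho(t)^{-1}=x^{\epsilon}$. The case $\epsilon=-1$ would make $x$ conjugate to $x^{-1}$ in $\B F_n$, forcing $x=1$, which is false. The case $\epsilon=+1$ forces $\rho(t)g\in C_{\B F_n}(x)=\langle x\rangle$ (this is precisely where one uses that $x$ is not a proper power in $\B F_n$); then $g=\rho(t)^{-1}x^k\in H$ for some $k\in\B Z$, contradicting $g\notin H$.

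I expect the main obstacle to be producing an element of $N$ which is not a proper power in $\B F_n$; the commutator $x=[x_1^{o_1},x_2^{o_2}]$ above resolves this with a direct cyclic-period argument. With $y_1,y_2$ and $\phi$ in hand, Lemma~\ref{lem:not q-embedding} immediately concludes that $\rho$ is not a quasi-isometric embedding.
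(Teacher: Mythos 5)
Your proof is correct and follows essentially the same route as the paper's: pass to a normal finite-index subgroup $N$ of the image, exhibit an element of $N$ that is not a proper power (so its centraliser is cyclic and lands in the image), conjugate it by an element outside the image, lift, and feed the resulting pair into the Separation Lemma, Lemma~\ref{L:qmor-lip} and Lemma~\ref{lem:not q-embedding}. The only differences are cosmetic: you take the normal core and the commutator $[x_1^{o_1},x_2^{o_2}]$ where the paper cites a reference for $N$ and uses the product $x^k y^\ell$, and you check non-conjugacy of the cyclic subgroups directly on the lifts rather than first in $\rho(\B F_m)$.
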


\begin{proof}
Set $H=\B F_m$ and $G=\B F_n$.
We prove the existence of $\phi$ and $h_1, h_2$ in Lemma \ref{lem:not q-embedding}.

Since $[G:\rho(H)]<\infty$ there exists $N \leq \rho(H)$ such that 
$N$ is normal and of finite index in $G$ \cite[Corollary 4.16]{zbMATH00870511}.
Let $x,y$ be distinct generators of $\B F_n$.
Since $G/N$ is finite, there exist $k, \ell>0$ such that $x^k, y^\ell \in N$.
Set $g_1=x^k y^\ell$.
Since $g_1$ is not a proper power in $\B F_n$, the centraliser $\OP{C}_{\B F_n}(g_1)$
of $g_1$ in $\B F_n$ is the cyclic subgroup generated by $g_1$:
\[
\OP{C}_{\B F_n}(g_1) = \langle g_1 \rangle  \leq N \leq \rho(H).
\]
Choose some $u \in G \setminus \rho(H)$ and set $g_2 = ug_1u^{-1}$.
By construction, $g_1$ and $g_2$ are conjugate in $G$ and $g_2 \in N$ since $N$ is normal.
We claim that the subgroups $\langle g_1 \rangle$ and $\langle g_2 \rangle$ are not conjugate in $\rho(H)$.
If they are then there exists $v \in \rho(H)$ such that 
\[
\langle g_2 \rangle = \langle vg_1v^{-1} \rangle.
\]
Since $\langle g_1 \rangle$ and $\langle g_2 \rangle$ are infinite cyclic, either $g_2=vg_1v^{-1}$ or $g_2=vg_1^{-1}v^{-1}$.
If $g_2=vg_1v^{-1}$ then $vg_1v^{-1}=ug_1u^{-1}$ which implies that 
$v^{-1}u \in \OP{C}_{\B F_n}(g_1) \leq \rho(H)$, so $u \in v \cdot \rho(H)=\rho(H)$, contradicting the choice of $u$.
If $g_2=vg_1^{-1}v^{-1}$ then $vg_1^{-1}v^{-1}=ug_1u^{-1}$ which implies that $g_1^{-1}$ is conjugate in $\B F_n$ to $g_1$, which is again a contradiction since $g_1 \neq 1$.
It follows that $\langle g_1 \rangle$ and $\langle g_2 \rangle$ are not conjugate in $\rho(H)$ as claimed.

Let $h_1,h_2 \in H$ be in the preimages of $g_1,g_2$, respectively.
Then $\langle h_1 \rangle$ and $\langle h_2 \rangle$  cannot be conjugate in $H$. 
By the Separation Lemma \ref{L:separation} there exists a homogeneous quasi-morphism $\psi \colon H \to \B R$ such that $\psi(h_1) \neq \psi(h_2)$.
By Lemma \ref{L:qmor-lip} $\psi$ is Lipschitz.
Then $\rho$ is not a quasi-isometric embedding by Lemma \ref{lem:not q-embedding}
\end{proof}

We now turn to deal with homomorphisms $\B F_m \to \B F_n$ whose images have infinite index.
We need a machinery to construct maps $\phi$ as in Lemma \ref{lem:not q-sur}.

A {\em killer word} for a subgroup $G \leq \B F_n$ is a reduced word $w \in \B F_n$ which is not a subword in any $g \in G$. 
The reason for this terminology is that the little counting quasi-morphism $\psi_w$ vanishes on $G$.
In fact, $\psi_u|_G=0$ for any word $u$ which contains $w$.

\begin{lemma}\label{lem:Pagliantini Rolli}
Let $n \geq 2$.
Any  $G \leq \B F_n$ of infinite index and finite rank admits a killer word.
\end{lemma}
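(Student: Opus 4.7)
The plan is to recast the existence of a killer word as a graph-theoretic property of the Stallings graph of $G$ and then to construct it iteratively. Let $\Gamma$ be the Stallings graph of $G$: a finite folded graph with basepoint $v_0$ whose edges are labelled by $\{x_1,\dots,x_n\}$ and such that the reduced words in $G$ are exactly the labels of reduced closed paths at $v_0$. Since $[\B F_n:G]=\infty$, $\Gamma$ embeds as a proper finite subgraph of the infinite $2n$-regular Schreier coset graph $\Gamma_\infty=G\backslash\B F_n$. Complete $\Gamma$ to an automaton $\widehat{\Gamma}=V(\Gamma)\sqcup\{\ast\}$ by sending every missing edge to a sink~$\ast$.

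The first observation is a dictionary: a reduced word $w$ occurs as a subword of some element of $G$ if and only if it labels a reduced path in $\Gamma$ starting at \emph{some} vertex, equivalently, if and only if its trajectory in $\widehat{\Gamma}$ starting from some $v\in V(\Gamma)$ does not reach $\ast$. The nontrivial direction extends a reduced path labelled $w$ from $v$ to $v'$ in $\Gamma$ to a reduced closed path at $v_0$ by prepending a reduced path $v_0\to v$ and appending one $v'\to v_0$; this is possible because each vertex of $\Gamma$ has degree at least two (the trivial case $G=\{1\}$ is immediate, as any single letter is then a killer). Consequently, $w$ is a killer word precisely when reading $w$ in $\widehat{\Gamma}$ from every vertex of $V(\Gamma)$ ends at $\ast$.

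I would now construct such a $w$ by iteratively ``peeling off'' vertices. Connectedness of $\Gamma_\infty$, combined with $\Gamma\subsetneq\Gamma_\infty$, implies that from every $v\in V(\Gamma)$ some reduced word drives $v$ to $\ast$ in $\widehat{\Gamma}$. Starting from the active set $V_0:=V(\Gamma)$ of size $N$, at step $i+1$ I pick any $v\in V_i$, choose a reduced word $u_{i+1}$ that kills $v$, and set $V_{i+1}:=f_{u_{i+1}}(V_i)\setminus\{\ast\}$, where $f_{u_{i+1}}$ denotes the transition map in $\widehat{\Gamma}$. Since $v$ is killed and $f_{u_{i+1}}$ is a function, $|V_{i+1}|\le|V_i|-1$; after at most $N$ rounds the active set is empty and the concatenation $u_1u_2\cdots u_N$ kills every starting vertex.

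The main technical obstacle is to ensure the concatenation is \emph{reduced}. Between consecutive $u_i$'s I would insert a buffer letter $b_i$ chosen to differ from the inverse of the last letter of $u_i$ and from the inverse of the first letter of $u_{i+1}$; with $n\ge2$ the alphabet has $2n\ge4$ letters, so such a $b_i$ always exists. A short argument, again using connectedness of $\Gamma_\infty$ and $n\ge 2$, shows that each $u_{i+1}$ can be chosen with its first letter avoiding any single prescribed ``forbidden'' letter, so buffers and bodies can be selected compatibly. Inserting buffers cannot enlarge the active set, so the peeling argument survives intact, and the resulting reduced word is a killer word.
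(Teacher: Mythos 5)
Your plan is essentially the paper's own construction (Theorem \ref{T:killer} in the Appendix): pass to the Stallings core graph $\Gamma$ of $G$, observe that infinite index forces the existence of ``bad'' vertices where some labelled edge is missing, translate ``$w$ is a subword of some $g\in G$'' into ``$w$ labels a reduced path in $\Gamma$ starting at some vertex'', and then build a reduced word which, read from every vertex of $\Gamma$, eventually exits. The differences are organisational: the paper grows a single reduced word by successive prefix extension, at step $k$ routing the current path from $v_k$ to a bad vertex and then out along a missing edge (so reducedness is automatic), whereas you concatenate independently chosen escape words and restore reducedness with buffer letters via a pigeonhole on the $2n\ge 4$ letters. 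Your peeling bookkeeping (the sink automaton, $|V_{i+1}|\le|V_i|-1$, buffers never enlarging the active set) is correct, and your dictionary is correct, including the degree-$\ge 2$ remark needed for the backward direction.

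The one point you should not leave as ``a short argument'' is the claim that from every vertex $v$ of $\Gamma$, and with one prescribed forbidden first letter, some reduced word drives $v$ out of $\Gamma$. This is the technical heart of the whole proof --- it is exactly what the paper isolates as Lemma \ref{lem:continue path} --- and it is not quite immediate from connectedness of the Schreier graph: the delicate cases are (a) when the escape route must return to the vertex you started from (the paper handles this with an auxiliary non-trivial reduced loop $\lambda$), and (b) when the only missing edge at the relevant bad vertex is labelled by the letter you arrive on or the letter you are forbidden to use. One clean way to discharge your version of the claim: if every reduced word from $v$ with first letter $\ne c$ stays in $\Gamma$, then every vertex reachable by such a path of positive length must carry all $2n-1$ admissible continuations and hence has full valence $2n$, so the reachable set is closed under adjacency in the Schreier graph except possibly along the single $c$-edge at $v$; it is therefore a finite component of the Schreier graph minus one edge in which every vertex has even degree except $v$, contradicting the handshake lemma (or, if the $c$-edge at $v$ is present, contradicting connectedness and infiniteness of the Schreier graph). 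With that supplied, your argument is complete.
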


\begin{proof}
A direct consequence of \cite[Lemmas 4.1 and 4.6]{zbMATH06424932}.
\end{proof}

Pagliantini and Rolli's argument in \cite{zbMATH06424932} is indirect.
In the Appendix we give a constructive proof of Lemma \ref{lem:Pagliantini Rolli} which gives an effective way to generate killer words.

\begin{proposition}\label{P:not-qsur}
Let $n \geq 2$.
Any homomorphism $\rho\colon \B F_m\to \B F_n$ with image of infinite index is not quasi-surjective.
\end{proposition}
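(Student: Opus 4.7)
The plan is to apply Lemma \ref{lem:not q-sur} with $\phi$ taken to be the homogenisation $\overline{\psi_w}$ of a little counting quasi-morphism associated to a carefully chosen word $w \in \B F_n$. For this I need $w$ to be a killer word for the subgroup $G = \rho(\B F_m)$, so that $\phi$ vanishes on $\rho(\B F_m)$, and I also need $w$ to be cyclically reduced, which via \eqref{eq:psi_w(w)=1} yields $\phi(w)=1$ and hence $\phi \neq 0$.

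First I would observe that $G = \rho(\B F_m)$ is finitely generated (as the image of a finitely generated group, so of finite rank by Nielsen--Schreier) and has infinite index in $\B F_n$ by hypothesis, so Lemma \ref{lem:Pagliantini Rolli} applies and produces a killer word $w$ for $G$; by the constructive proof in the appendix this $w$ may be taken cyclically reduced. Setting $\phi = \overline{\psi_w}$, I would then verify the three hypotheses needed in turn. That $\phi \neq 0$ is \eqref{eq:psi_w(w)=1}, which gives $\phi(w)=1$. That $\phi \circ \rho = 0$ is immediate from the killer property: for every $h \in \B F_m$ and every $k \geq 1$ the element $\rho(h)^k = \rho(h^k)$ belongs to $G$, and since $G$ is closed under inversion neither $w$ nor $w^{-1}$ can occur as a subword of its reduced form (because a copy of $w^{-1}$ in $g$ produces a copy of $w$ in $g^{-1} \in G$). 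Hence $\psi_w(\rho(h)^k) = c_w(\rho(h)^k) - c_{w^{-1}}(\rho(h)^k) = 0$ for all $k$, and therefore $\phi(\rho(h)) = \lim_{k \to \infty} k^{-1} \psi_w(\rho(h)^k) = 0$. That $\phi$ is Lipschitz follows from Lemma \ref{L:qmor-lip} together with the elementary bound $|\phi(x_i)| \leq \ell(x_i)/\ell(w) \leq 1$ supplied by \eqref{eq:little ell(g)/ell(w)} on each standard generator $x_i$ of $\B F_n$. With the three hypotheses in place, Lemma \ref{lem:not q-sur} gives the conclusion.

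The real content in this proposition is thus packaged into Lemma \ref{lem:Pagliantini Rolli}; once a (cyclically reduced) killer word is in hand, the rest of the argument is just a direct assembly of Lemmas \ref{L:qmor-lip}, \ref{lem:not q-sur} and the facts recorded in Section \ref{subsec:little counting}. The only mildly delicate point is ensuring that the killer word can be taken cyclically reduced, which is precisely the sort of control the appendix construction is designed to provide.
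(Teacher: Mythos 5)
Your proof is correct and follows essentially the same route as the paper: invoke Lemma \ref{lem:Pagliantini Rolli} to obtain a killer word for $\rho(\B F_m)$ (which is finitely generated, hence of finite rank), arrange for it to be cyclically reduced, pass to the homogenised little counting quasi-morphism, and apply Lemmas \ref{L:qmor-lip} and \ref{lem:not q-sur}. The one point where you diverge is the justification that the killer word may be taken cyclically reduced: the appendix construction does not guarantee this (see Example \ref{E:killer}, whose output $a^{-1}ba^2$ still needs padding), and the paper instead appends a suitable generator on the left or right --- possible since $n \geq 2$ --- which keeps the word reduced and a killer word while making it cyclically reduced; you should replace the appeal to the appendix by this padding step.
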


\begin{proof}
Set $H=\B F_m$ and $G=\B F_n$.
By Lemma \ref{lem:Pagliantini Rolli} there exists a reduced word $w' \in \B F_n$ which is not a subword in any $u \in \rho(H)$.
Since $n \geq 2$ we may multiply $w'$ on the left or on the right by an appropriate generator of $\B F_n$ to obtain a cyclically reduce $w$ which is not a subword in any $u \in \rho(H)$.
Clearly, the same is then true for $w^{-1}$.
As a result, the little counting quasi-morphism $\psi_w \colon G \to \B R$ vanishes on $\rho(H)$.
Consequently, the same holds for its homogenisation $\overline{\psi_w}$, which is non-trivial by \eqref{eqn:psi=1}.
By Lemma \ref{L:qmor-lip} $\overline{\psi}_w$ is Lipschitz and by construction $\overline{\psi}_w \circ \rho=0$.
The result follows from Lemma \ref{lem:not q-sur}.
\end{proof}

\begin{remark}\label{E:dihedral}
Consider $\B F_2=\B Z*\B Z$ with the canonical set  $S=\{a,b\}$ of free generators.
Let $\pi \colon \B Z*\B Z \to \B Z_2 * \B Z_2$ be the canonical quotient and $T=\{\bar{a},\bar{b}\}$ the image of $S$.
Let $G$ be the kernel of $\pi$.
Then $G$ is a free group, since it is a subgroup of a free group, of infinite rank.
One easily checks that $\OP{diam}(\B Z_2*\B Z_2,\| \, \|_T)=2$ and that any $x \in \B Z_2*\B Z_2$ lifts to $\tilde{x} \in \B F_2$ such that $\|x\|_T=\|\tilde{x}\|_S$.
From this it follows that $d(y,G) \leq 2$ for any $y \in \B F_2$, in particular the inclusion $i \colon G \to \B F_2$ is quasi-surjective.
This shows that the assumption in Proposition~\ref{P:not-qsur} that the groups are of finite ranks is essential.

On the other hand, $G \lhd \B F_2$ and the elements $a^2b^2$ and 
$ab^2a = a^{-1}a^2b^2a$ are conjugate in $\B F_2$ but not in $G$.
They are also not proper powers.
By Lemmas \ref{L:separation}, \ref{L:qmor-lip} and \ref{lem:not q-embedding} the  inclusion $i \colon G \to \B F_2$ is not a quasi-isometric embedding where $G$ is equipped with the conjugation invariant word norm with respect to some (infinite) generating set.
\hfill $\diamondsuit$
\end{remark}


We remark that any  $1 \neq N \lhd \B F_n$ must be unbounded.
Indeed, $N$ must contain a cyclically reduced $w \neq 1$, then by \eqref{eq:psi_w(w)=1} $\overline{\psi_w}(w^k)=k$ 
, and since $\overline{\psi_w}$ is Lipschitz by Lemma \ref{L:qmor-lip}, $\|w^k\| \xrightarrow{k \to \infty} \infty$.

\begin{proof}[Proof of Theorem \ref{T:main}]
Let $\varphi\colon \B F_m\to \B F_n$ be a homomorphism. If it is an isomorphism 
then $\varphi$ is a quasi-isometry since it is (quasi) surjective  and since both $\varphi$ and $\varphi^{-1}$ are Lipschitz by Lemma \ref{L:id-lip}


Suppose that $\varphi$ is not an isomorphism.
If $\varphi$ is not injective then $\ker \varphi$ is an unbounded subset whose image is bounded, so $\varphi$ is not a quasi-isometric embedding.
If $\varphi$ is not surjective then it cannot be a quasi-isometry by  Propositions \ref{P:distorted} and \ref{P:not-qsur}.
\end{proof}

\appendix

\section{Killer words}

The valence of a vertex $v$ in a graph $\Gamma$ is $k$ if a neighbourhood of $v$ is homeomorphic to a point with $k$ whiskers.
The {\em initial} and {\em terminal} vertices of a simplicial path $\gamma$ in $\Gamma$ are denoted $\OP{ini}(\gamma)$ and $\OP{term}(\gamma)$.
A simplicial path in $\Gamma$ is called {\em reduced} if it has no backtracks.
A subpath $\gamma'\subseteq \gamma$ is called 
a {\em prefix} of $\gamma$ 
if $\gamma=\epsilon_1\dots \epsilon_n$ is the concatenation of simplicial edges and $\gamma'=\epsilon_1\dots\epsilon_k$ for some $0 \leq k \leq n$.
A simplicial path $\gamma$ is called {\em self intersecting} if the sequence of vertices $v_0,v_1,\dots,v_n$ it visits contains repetitions.
In particular, $\OP{length}(\gamma) \geq 1$.

\begin{lemma}\label{lem:continue path}
Let $\Gamma$ be a connected graph with finitely many vertices, possibly with
multiple edges and loops.
Suppose that the valence of every $v \in V(\Gamma)$ is at least $2$.
Let $\gamma$ be a reduced simplicial path in $\Gamma$.
Then every vertex of $\Gamma$ is the terminus of some reduced simplicial path $\tilde{\gamma}$ 
with prefix $\gamma$.
\end{lemma}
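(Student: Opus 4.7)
The plan is to pass to the universal cover $\pi \colon \tilde{\Gamma} \to \Gamma$, which is a tree whose every vertex has the same valence as its image in $\Gamma$, hence at least $2$. Lift $\gamma$ to a path $\tilde{\gamma}$ in $\tilde{\Gamma}$ ending at some $\tilde{v}$ above $v := t(\gamma)$, and let $\tilde{e}$ denote the last directed edge of $\tilde{\gamma}$ when $\gamma$ is non-empty. Because $\tilde{\Gamma}$ is a tree, every simplicial path in it is automatically reduced, and reduced extensions of $\gamma$ in $\Gamma$ correspond bijectively via $\pi$ to paths in $\tilde{\Gamma}$ starting at $\tilde{v}$ whose first edge is not $\overline{\tilde{e}}$. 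The endpoints of such lifted paths form the ``forward cone'' $R := V(\tilde{\Gamma}) \setminus T$, where $T$ is the component of $\tilde{\Gamma} \setminus \{\tilde{v}\}$ containing the origin of $\tilde{e}$ (and $R = V(\tilde{\Gamma})$ when $\gamma$ is empty).

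Under this translation the lemma becomes equivalent to the surjectivity $\pi(R) = V(\Gamma)$: every vertex $w \in V(\Gamma)$ must have at least one preimage inside the forward cone. The fibres of $\pi$ are precisely the orbits of the deck group $\Delta \cong \pi_1(\Gamma, v)$ acting freely on the tree $\tilde{\Gamma}$, so the claim reduces to showing that no $\Delta$-orbit is entirely contained in the proper subtree $T$.

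For the main step I would invoke the valence hypothesis to force $\Gamma$ to contain a cycle (a standard degree argument on a connected graph with every vertex of valence at least $2$), so that $\Delta \neq 1$. Any $g \in \Delta \setminus \{1\}$ then acts on the tree $\tilde{\Gamma}$ as a hyperbolic isometry with a bi-infinite invariant axis; iterating $g$ on any chosen preimage $\tilde{w}_0$ of $w$ yields preimages escaping any prescribed proper subtree of $\tilde{\Gamma}$, in particular $T$. Hence some iterate of $\tilde{w}_0$ lies in $R$, supplying the required preimage, and $\pi(R) = V(\Gamma)$ follows.

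The main obstacle is the geometric analysis of the $\Delta$-action on $\tilde{\Gamma}$: one must verify that $\Gamma$ genuinely contains a cycle (so $\Delta$ is non-trivial) and that every non-trivial deck transformation acts hyperbolically without fixed vertex. Both are consequences of the freeness of the deck action on a tree; the statement tacitly excludes the case when $\Gamma$ is itself a tree (as in an unbounded bi-infinite path), where the conclusion would genuinely fail because a short $\gamma$ cannot be extended back across its last edge. In the finite setting relevant to the killer-word application, the existence of a cycle is automatic and the universal-cover correspondence delivers the lemma at once.
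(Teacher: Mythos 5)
Your reduction is sound up to the final step: passing to the universal cover $\pi\colon\widetilde\Gamma\to\Gamma$, identifying the termini of reduced extensions of $\gamma$ with $R=V(\widetilde\Gamma)\setminus T$ for the branch $T$ at $\tilde v$ entered by $\tilde e$, and reducing the lemma to the assertion that no fibre of $\pi$ (equivalently, no $\Delta$-orbit) is contained in $T$. The gap is in how you prove that assertion. The orbit of a vertex $\tilde w_0$ under the cyclic group generated by a single hyperbolic element $g$ stays at constant distance from the axis of $g$, so it \emph{is} contained in a proper subtree (a bounded neighbourhood of the axis), and it can perfectly well be contained in $T$. Concretely, let $\Gamma$ be the wedge of two circles, so that $\widetilde\Gamma$ is the Cayley tree of $F_2=\langle a,b\rangle$ and $\Delta=F_2$ acts by left translation; let $T$ be the branch at the identity vertex consisting of all vertices whose reduced word begins with $a^{-1}$. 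For $g=a^{-1}ba$ and $\tilde w_0=a^{-1}$ one has $g^k\tilde w_0=a^{-1}b^k\in T$ for every $k\in\mathbb Z$. So the claim that iterating a single non-trivial deck transformation escapes any prescribed proper subtree is false, and your argument does not show that the fibre meets $R$. What is true, and what you need, is that the \emph{full} orbit $\Delta\tilde w_0$ leaves $T$ (here $a\cdot a^{-1}=1\notin T$); proving this uses the whole group, i.e.\ the cocompactness of the action coming from finiteness of $\Gamma$: every fibre is $\operatorname{diam}(\Gamma)$-dense in $\widetilde\Gamma$, while $R$ contains an entire branch at $\tilde v$ (there is one besides $T$ because the valence of $\tilde v$ is at least $2$), and that branch contains vertices so far from $\tilde v$ that their $\operatorname{diam}(\Gamma)$-ball still lies in $R$. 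With that replacement the covering-space proof goes through.

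Two further remarks. Your observation that the statement fails for infinite graphs (the bi-infinite path) is correct, and finiteness of $\Gamma$ is genuinely needed: the paper's own proof uses it tacitly when asserting that sufficiently long reduced paths must self-intersect, and in the application $\Gamma$ is the finite Stallings core. Finally, the paper's argument is entirely different and more elementary: it works directly in $\Gamma$, takes $T$ to be the set of termini of admissible continuations of $\gamma$, and shows $T=V(\Gamma)$ by extending a continuation of positive length across any edge from $T$ to its complement (a non-trivial loop $\lambda$ based at $\operatorname{term}(\gamma)$ handles the case where that edge starts at the base vertex); no covering spaces or tree isometries are involved.
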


\begin{proof}
Let $u$ be the terminus of $\gamma$.
Let $\Pi$ be the set of all simplicial paths $\pi$ in $\Gamma$ starting at $u$ such that $\gamma \pi$ has no backtracks, thus, $\gamma \pi$ is reduced with prefix $\gamma$.
Set
\[
T=\{\OP{term}(\pi) : \pi \in \Pi\}.
\]
Since $\OP{term}(\gamma\pi)=\OP{term}(\pi)$, we will complete the proof by proving that $T=V(\Gamma)$.

Since the valence of every $v \in V(\Gamma)$ is at least $2$, it follows that $\Pi$ contains arbitrarily long paths.
Since $V(\Gamma)$ is finite, $\Pi$ must contain self-intersecting paths and we choose such a path $\pi$ of minimum length.
Say $\pi$ is the concatenation of simplicial edges $\epsilon_1 \dots \epsilon_n$ visiting the vertices $v_0,\dots,v_n$.
The minimality of $\OP{length}(\pi)$ implies that $v_n=v_k$ for some $0 \leq k<n$ (otherwise we can discard the last edge $\epsilon_n$ in $\pi$).
We now continue the path $\pi$ by backtracking along $\pi$ from $v_k$ down to $v_0=u$ and obtain in this way a loop $\lambda=\pi \overline{\epsilon_k} \dots \overline{\epsilon_1}$ from $u$ to $u$. 
The minimality of $\OP{length}(\pi)$ implies that $\lambda$ has no backtracks because a backtrack in $\lambda$, if it exists, can only occur at the juncture of $\pi$ with $\overline{\epsilon_k}$ which implies that $k \geq 1$ and that $v_{n-1}=v_{k-1}$ which contradicts the minimality of $\OP{length}(\pi)$.
Thus,
$
\lambda \in \Pi,
$
and $\OP{length}(\lambda) \geq 1$ (since it contains the self-intersecting $\pi$ as a prefix).
We are now ready to prove that $T=V(\Gamma)$.

First, $T \neq \emptyset$ since $\Pi$ contains the trivial path from $u$ to $u$.
Assume that $T \subsetneq V(\Gamma)$.
Choose $v \in V(\Gamma) \setminus T$ at distance $1$ from some $v' \in T$.
Let $\epsilon$ be a simplicial edge from $v'$ to $v$.
Since $v' \in T$, there exists $\pi \in \Pi$ such that $v'=\OP{term}(\pi)$.
If $v'=u$ we choose $\pi$ to be the loop $\lambda$ we constructed above.
In either case, whether $v'=u$ or not, $\OP{length}(\pi) \geq 1$.
Clearly, $\pi \epsilon$ has no backtracks because $\pi$ has this property and all the vertices of $\pi$ belong to $T$ while $\OP{term}(\epsilon) \notin T$.
Since $\OP{length}(\pi) \geq 1$ it is clear that $\gamma \pi \epsilon$ is reduced because $\gamma\pi$ is. 
We deduce that  $\pi \epsilon \in \Pi$.
But then $v=\OP{term}(\pi \epsilon) \in T$ which is a contradiction.
\end{proof}

Recall that a {\em killer word} for $G \leq \B F_n$ is a reduced word which is not a subword in any $g \in G$.

\begin{theorem}\label{T:killer}
Let $G\leq \B F_n$ be a finitely generated subgroup of infinite index. 
Then $G$ admits  a killer word $w \in \B F_n$.
\end{theorem}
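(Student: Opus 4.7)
My plan is to work with the Stallings core graph $\Gamma$ attached to the finitely generated subgroup $G \leq \B F_n$: this is a finite connected labeled graph based at a vertex $\ast$, folded (at each vertex and each generator there is at most one outgoing edge with that label), in which the words read along reduced loops at $\ast$ are precisely the reduced representatives of elements of $G$, and every vertex has valence at least $2$. Because $[\B F_n : G] = \infty$, the graph $\Gamma$ cannot have full valence $2n$ everywhere, so there exist a vertex $v^\ast \in V(\Gamma)$ and a letter $x \in \{x_1^{\pm 1}, \dots, x_n^{\pm 1}\}$ such that $v^\ast$ has no outgoing edge with label $x$.

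First I would reformulate the problem combinatorially: a reduced word $w \in \B F_n$ is a subword of some element of $G$ if and only if $w$ labels a reduced simplicial path in $\Gamma$ between some pair of vertices. The easy direction follows from the correspondence above. For the converse, given a reduced path $p \xrightarrow{w} q$ in $\Gamma$, apply Lemma \ref{lem:continue path} twice: first extend $w$ forward from $q$ to reach $\ast$ along a reduced $v$, and then extend $v^{-1}w^{-1}$ from $\ast$ to reach $\ast$ along a reduced $u^{-1}$. The concatenation $uwv$ is then a reduced loop at $\ast$ and hence an element of $G$ whose reduced form contains $w$ as a contiguous subword. A killer word is therefore precisely a reduced word in $\B F_n$ that labels no reduced path in $\Gamma$.

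The desired $w$ is built iteratively so as to progressively eliminate the possible starting vertices. At stage $i$, let $S_i \subseteq V(\Gamma)$ denote the set of vertices $v$ for which the prefix $w_{[1,i]}$ reads successfully as a reduced path from $v$, and write $v^{(i)}$ for the resulting endpoint. To advance, pick any $v \in S_i$; since $v^{(i)}$ has valence $\geq 2$, it has an outgoing edge $e$ whose label differs from the inverse of the last letter of $w_{[1,i]}$. Apply Lemma \ref{lem:continue path} with the length-one reduced path $e$ to obtain a reduced path $\gamma$ from $v^{(i)}$ to $v^\ast$ having $e$ as its initial edge, and set
\[
u_{i+1} \;=\; \gamma \cdot x.
\]
Reading $u_{i+1}$ from $v^{(i)}$ traverses $\gamma$ successfully to arrive at $v^\ast$, and then fails at the final letter because $v^\ast$ has no outgoing $x$-edge; hence $v \notin S_{i+1}$ and $|S_{i+1}| \leq |S_i|-1$. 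Starting from $|S_0| \leq |V(\Gamma)|$, the procedure terminates with $S_k = \emptyset$ after at most $|V(\Gamma)|$ stages, and the assembled $w = u_1 u_2 \cdots u_k$ labels no reduced path in $\Gamma$, hence is a killer word.

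The principal technical point to monitor is that $w$ remains a reduced word of $\B F_n$ throughout the construction. Junctions between consecutive $u_i$'s are reduced by the choice of the first letter of $u_{i+1}$; within $u_{i+1} = \gamma \cdot x$ the concatenation is reduced because the last letter of $\gamma$ cannot be $x^{-1}$: its inverse labels the incoming edge to $v^\ast$, so it is an outgoing direction present at $v^\ast$ and consequently different from the absent direction $x$. Once these reducedness checks are in place, the construction yields the required killer word.
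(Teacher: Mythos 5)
Your proof is correct and follows essentially the same route as the paper's: both pass to the Stallings core graph, use the path-continuation lemma (Lemma \ref{lem:continue path}) to steer a reduced path to a valence-deficient vertex, and iteratively extend a reduced word so that it fails to read from one more vertex at each stage, terminating after at most $|V(\Gamma)|$ steps. The only cosmetic differences are that you append the missing letter $x$ at a fixed deficient vertex $v^\ast$ where the paper steps off $\Gamma$ along an edge of the covering space $\widetilde{X}$, and you additionally prove (but do not need) the converse characterisation that every word readable in $\Gamma$ occurs as a subword of some element of $G$.
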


\begin{proof}
Let $S=\{s_1,\ldots,s_n\}$ be the standard generating set of $\B F_n$.  Let
$X=\bigvee \B S^1$ be a graph with one vertex $x\in X$ and $n$ edges, 
all of which are loops, so that $\pi_1(X,x)\cong \B F_n$. 
The graph $X$ is directed and each edge is labelled
by a generator $s\in S$. Moreover, the valence of the vertex $x$ is equal to
$2n$.  Let $p\colon (\widetilde{X},\tilde{x})\to (X,x)$ be a covering projection
corresponding to the subgroup $G\leq \pi_1(X,x)$.  We equip $\widetilde{X}$ with the
directed labelled graph structure such that the projection $p$ is a morphism of
directed labelled graphs.

Since $G$ is finitely generated $\widetilde{X}$ contains a finite connected subgraph $\Gamma$ which contains $\tilde{x}$, has no vertices of valence $1$ and is a deformation retract of $\widetilde{X}$.
This is the {\it core} graph in the terminology of Stallings \cite{zbMATH03824074}.
The restriction $p\colon \Gamma\to X$
of the covering map is not a covering any more, since otherwise $G$ would be of
finite index. It implies that some vertices of $\Gamma$ have valence smaller
than $2n$.  We call them {\em bad}.

Recall that the edges of the graph $\widetilde{X}$ are oriented and labelled by the letters of the alphabet $S$.
Hence, the labels of the edges in a simplicial path $\pi$ in $\widetilde{X}$ determine 
a word $w$ in the alphabet $S^{\pm 1}$ 
which is reduced if and only if the path $\pi$ is reduced.
Conversely, any vertex $v \in V(\widetilde{X})$ and any word $w$ in the alphabet $S^{\pm 1}$ determine a unique simplicial path in $\widetilde{X}$  which we denote by
$\OP{path}(v,w)$.
To see how this path is constructed, observe that $\widetilde{X}$ is a covering of $X$ so
for any $v \in V(\widetilde{X})$ and any $s \in S$ there is a unique directed edge $e_s \in \Gamma$ with label $s$ emanating from $v$ and a unique directed edge $e_{s^{-1}} \in \Gamma$ with label $s$ terminating at $v$.
In the first case we obtain the simplicial edge $\epsilon_s=e_s$, and in the second the simplicial edge $\epsilon_{s^{-1}}=\overline{e_{s^{-1}}}$, both have initial vertex $v$.
If $w=w_1\dots w_n$ is a word in the alphabet $S^{\pm 1}$, then $\OP{path}(v,w)$ is the concatenation of the simplicial edges $\epsilon_{w_1}, \dots ,\epsilon_{w_n}$ described above, where $\epsilon_{w_1}$ starts at $v$ and for every $2 \leq i \leq n$ the simplicial edge $\epsilon_{w_i}$ starts at $\OP{term}(\epsilon_{w_{i-1}})$.

We are now ready to construct a killer word for $G$.
Before we start, recall that by construction every  $g\in G$ is represented by a reduced simplicial loop in $\Gamma$ based at $\tilde{x}$.  

Let $v_1,\ldots, v_m$ be an enumeration of the vertices of $\Gamma$.  
We construct by induction reduced words $w_0,\dots,w_m$, such that 
each is a prefix of its successor, with the property that for any 
$0 \leq k \leq m$, 
\[
1 \leq i \leq k \ \implies \ \OP{path}(v_i,w_k) \nsubseteq \Gamma.
\]
To start the induction, set $w_0$ to be the empty word.
The condition holds vacuously.
Assume that $w_{k-1}$ has been constructed for some $1 \leq k \leq m$.
Consider $\pi = \OP{path}(v_k,w_{k-1})$.
It is reduced since $w_{k-1}$ is.
If $\pi \nsubseteq \Gamma$ then set $w_k=w_{k-1}$.
Then $\OP{path}(v_k,w_k) \nsubseteq \Gamma$ by assumption and $\OP{path}(v_i,w_k) \nsubseteq \Gamma$ for all $1 \leq i \leq k-1$ by construction of $w_{k-1}$.
If $\pi \subseteq \Gamma$, use Lemma \ref{lem:continue path} to continue it to a reduced path $\pi' \subseteq \Gamma$ whose terminus is a bad vertex $u'$.
Continue $\pi'$ along a simplicial edge $\epsilon$ in $\widetilde{X}$ not in $\Gamma$.
Then $\pi' \epsilon$ is a reduced path and let $w_k$ be the word associated to it.
Now, $w_{k-1}$ is a prefix of $w_k$ since $\pi$ is a prefix of $\pi'$.
Therefore $\OP{path}(v_i,w_k) \nsubseteq \Gamma$ for all $1 \leq i \leq k-1$ by construction of $w_{k-1}$, and $\OP{path}(v_k,w_k)=\pi'\epsilon \nsubseteq \Gamma$ by construction.
This completes the induction step of the construction.

Set $w=w_m$.
By construction,
$
\OP{path}(v,w) \nsubseteq \Gamma,
$ 
for any $v \in V(\Gamma)$.
We will finish the proof by showing that $w$ is a killer word for $G$.

Consider an arbitrary $g \in G$ presented as a reduced word in $\B F_n$.
We claim that $w$ cannot be a subword of $g$.
Let $\gamma \subseteq \Gamma$ be a reduced simplicial loop based at $\widetilde{x}$ which represents $g$.
Let $u$ be the word in the alphabet $S^{\pm 1}$ that $\gamma$ determines.
Then $\gamma=\OP{path}(\widetilde{x},u)$ and since $\B F_n$ is free, $g=u$ as reduced words.
Suppose that  $w$ is a subword of $g$.
Then $u=u' w u''$ for some subwords $u',u''$ of $u$.
Let $v'$ be the terminus of $\OP{path}(\widetilde{x},u')$ and $v''$ the terminus of $\OP{path}(v',w)$.
Then $v' \in V(\Gamma)$ since $\OP{path}(\widetilde{x},u') \subseteq \gamma \subseteq \Gamma$, and 
\[
\gamma = \OP{path}(\widetilde{x},u) =  \OP{path}(\widetilde{x},u') \cdot \OP{path}(v',w) \cdot \OP{path}(v'',u'').
\]
In particular, $\OP{path}(v',w) \subseteq \Gamma$ which is a contradiction, 
since $v' \in V(\Gamma)$.
\end{proof}

The proof of Theorem \ref{T:killer} gives an algorithm to find killer words which we demonstrate in the next example.

\begin{example}\label{E:killer}
Let $G\leq \B F_2=\langle a,b\rangle$ be generated by $aba^{-1}b^{-1}$, $b^4$ and $a^3$.
The corresponding graph $\Gamma$ is pictured below. All vertices except $v_0$ are bad
and are marked red.

\begin{center}
\begin{tikzpicture}[line width=2pt, scale=0.5]
\draw (0,0) -- (5,0) -- (10,0) -- (10,5) -- (5,5) -- (0,5) -- (0,0);
\draw (5,0) -- (5,5);
\draw (5,0) -- (7.5,-4) -- (10,0);

\draw [->] (0,0) -- (2.5,0);
\draw [->] (5,0) -- (7.5,0);
\draw [->] (10,0) -- (10,2.5);
\draw [->] (5,5) -- (7.5,5);
\draw [->] (5,0) -- (5,2.5);
\draw [->] (5,5) -- (2.5,5);
\draw [->] (0,5) -- (0,2.5);
\draw [->] (10,0) -- (8.75,-2);
\draw [->] (7.5,-4) -- (6.25,-2);

\filldraw (5,0) circle [radius=5pt];
\filldraw [red] (0,0) circle [radius=5pt]
 (10,0) circle [radius=5pt]
 (10,5) circle [radius=5pt]
 (5,5) circle [radius=5pt]
 (0,5) circle [radius=5pt]
 (7.5,-4) circle [radius=5pt];

\draw (5.5,0.5) node {$v_0$};
\draw (10.5,0.5) node {$v_2$};
\draw (10.5,5.5) node {$v_3$};
\draw (5.5,5.5) node {$v_4$};
\draw (0.5,5.5) node {$v_5$};
\draw (7.5,-3) node {$v_1$};
\draw (0.5,0.5) node {$v_6$};

\draw (2.5,-1) node {$b$};
\draw (7.5,-1) node {$a$};
\draw (11,2.5) node {$b$};
\draw (7.5,6) node {$a$};
\draw (6,2.5) node {$b$};
\draw (-1,2.5) node {$b$};
\draw (2.5,6) node {$b$};
\draw (5.5,-2) node {$a$};
\draw (9.8,-2) node {$a$};
\end{tikzpicture}
\end{center}

The path from $v_0$ to $v_1$ defines an element $a^{-1}$. Augmenting it with $b$ yields
an element $a^{-1}b$ that cannot be obtained as a path starting at $v_0$. 
The path $v_1v_2v_3$ defines an element $a^{-1}b$ and augmenting it with $a$ yields
a word $a^{-1}ba$ that cannot be obtained as a path from either $v_0$ or $v_1$. It can, however,
be obtained as a path $v_2v_0v_4v_3$, so the new word is $a^{-1}ba^2$. It cannot be
obtained from a path starting at $v_3$, $v_4$, $v_5$ or $v_6$. 
So $a^{-1}ba$ is a killer word for $G$.

This word is not cyclically reduced, but multiplying it on the right by the generator~$b$ gives a cyclically reduced killer word $a^{-1}ba^2b$.
\hfill $\diamondsuit$
\end{example}

\bibliography{/home/kedra/sync/bibliography/bibliography}
\bibliographystyle{plain}

\end{document}